\DeclarePairedDelimiter{\ceil}{\lceil}{\rceil}
\newtheorem{theorem}{Theorem}[section]
\newtheorem{lemma}[theorem]{Lemma}
\newtheorem{definition}[theorem]{Definition}
\newcommand{\hsp}{\hspace{.5mm}}
\newcommand{\be}{\begin{equation}}
\newcommand{\ee}{\end{equation}}
\newcommand{\dmt}{\hsp \text{d}\mu(t)}
\numberwithin{equation}{section}
\title{On the boundedness of certain generalized Hilbert operators in $\ell^p$}
	\author{Nikolaos Athanasiou\thanks{American College of Thessaloniki, V. Sevenidi 17, Pylaia 55535, Thessaloniki, Greece. \hspace{13mm} E-mail address: nikathan@act.edu}}
\date{May 2022}
\begin{document}

\maketitle
\begin{abstract}
   \noindent The Hilbert matrix $\mathcal{H}_{n,m} = (n+m+1)^{-1}$ has been extensively studied in previous literature. In this paper we look at generalized Hilbert operators arising from measures on the interval $[0,1]$, such that the Hilbert matrix is obtained by the Lebesgue measure. We provide a necessary and sufficient condition for these operators to be bounded in $\ell^p$ and calculate their norm.
\end{abstract}

\section{Introduction}
\subsection{Motivation and history}
The study of the classical Hilbert operator has its origins on the classical inequality of the same name. 

\begin{theorem}{\textbf{(Hilbert's inequality)}} Suppose that  $(a_m)_{m=1}^{\infty}$ and $(b_n)_{n=1}^{\infty}$ are sequences of non-negative terms and let $p,\hsp q> 1$ be H\"older conjugates, i.e. such that $\frac{1}{p}+\frac{1}{q}=1$. Assume that $(a_m)\in \ell^p$ and $(b_n)\in \ell^q$. Then there holds
\[ \sum_{m=1}^{\infty}\hsp \sum_{n=1}^{\infty}\frac{a_m\hsp b_n}{m+n} \leq \frac{\pi}{\text{sin}\left(\frac{\pi}{p}\right)} \left(\sum_{m=1}^{\infty}a_m^p\right)^{\frac{1}{p}}\left(\sum_{n=1}^{\infty}b_n^q\right)^{\frac{1}{q}}.\]
\end{theorem}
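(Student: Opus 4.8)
The plan is to prove Hilbert's inequality by the classical route through Schur's test (the "weighted Cauchy–Schwarz/Hölder" trick), which is the cleanest way to extract the sharp constant $\pi/\sin(\pi/p)$. First I would write the bilinear sum as
\[
S = \sum_{m=1}^{\infty}\sum_{n=1}^{\infty} \frac{a_m b_n}{m+n} = \sum_{m,n} \left( \frac{a_m}{(m+n)^{1/p}}\left(\frac{m}{n}\right)^{1/(pq)} \right)\left( \frac{b_n}{(m+n)^{1/q}}\left(\frac{n}{m}\right)^{1/(pq)} \right),
\]
splitting the kernel $1/(m+n)$ as $(m+n)^{-1/p}(m+n)^{-1/q}$ and inserting the compensating factor $(m/n)^{1/(pq)}$ so that each of the two grouped factors, when raised to the appropriate power and summed in one variable, produces a convergent integral-comparable sum. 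Applying Hölder's inequality with exponents $p$ and $q$ to this factorization gives
\[
S \le \left( \sum_{m,n} \frac{a_m^p}{m+n}\left(\frac{m}{n}\right)^{1/q} \right)^{1/p}\left( \sum_{m,n} \frac{b_n^q}{m+n}\left(\frac{n}{m}\right)^{1/p} \right)^{1/q}.
\]

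Next I would evaluate the inner sums. In the first factor, summing over $n$ for fixed $m$, I compare $\sum_{n\ge 1} \frac{1}{m+n}(m/n)^{1/q}$ with the integral $\int_0^\infty \frac{1}{m+x}(m/x)^{1/q}\,dx$, which by the substitution $x = mt$ becomes $\int_0^\infty \frac{t^{-1/q}}{1+t}\,dt = \frac{\pi}{\sin(\pi/q)} = \frac{\pi}{\sin(\pi/p)}$, independent of $m$. Symmetrically the inner sum over $m$ in the second factor equals (up to the monotonicity comparison) $\frac{\pi}{\sin(\pi/p)}$ as well. Substituting these back collapses the double sums to $\frac{\pi}{\sin(\pi/p)}\sum_m a_m^p$ and $\frac{\pi}{\sin(\pi/p)}\sum_n b_n^q$ respectively, and combining the two factors yields exactly
\[
S \le \frac{\pi}{\sin(\pi/p)}\left(\sum_m a_m^p\right)^{1/p}\left(\sum_n b_n^q\right)^{1/q}.
\]

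The main obstacle is the interchange between the discrete sums and the continuous integral: the integrand $x \mapsto (m+x)^{-1}(m/x)^{1/q}$ is not monotone on all of $(0,\infty)$, so the naive "sum $\le$ integral" comparison needs care. The standard fix is to note the integrand is eventually decreasing (for $x$ past the single interior maximum), handle the initial few terms separately, and observe that the correction is absorbed — or, more slickly, to integrate the function over shifted unit intervals $[n-1/2, n+1/2]$ where a midpoint/convexity argument gives the bound cleanly; either way one still obtains the constant $\frac{\pi}{\sin(\pi/p)}$ and, importantly, strict inequality (which is why the theorem is stated with $\le$ but the constant is known to be best possible only in the limiting sense — no extremal sequences exist in $\ell^p$). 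An alternative I would keep in reserve is to deduce the discrete inequality from its integral analogue on $L^p(0,\infty)$ via a direct comparison of the double sum with $\iint \frac{f(x)g(y)}{x+y}\,dx\,dy$ for suitable step functions $f,g$; this bypasses the one-variable monotonicity issue at the cost of a two-variable comparison, and again reproduces the sharp constant through the same Beta-integral computation $\int_0^\infty \frac{t^{-1/q}}{1+t}\,dt = \frac{\pi}{\sin(\pi/p)}$.
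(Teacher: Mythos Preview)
The paper does not prove Hilbert's inequality at all: it states the theorem as background, attributes the general case to Hardy and Riesz, and refers the reader to Hardy--Littlewood--P\'olya for a proof. There is therefore nothing in the paper to compare your argument against; what you have written is in fact essentially the classical proof found in that reference, via the weighted H\"older (Schur-test) factorization and the Beta integral $\int_0^\infty t^{-1/q}(1+t)^{-1}\,dt = \pi/\sin(\pi/p)$.

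One small correction to your write-up: the ``main obstacle'' you flag is not actually present. The integrand
\[
x \longmapsto \frac{1}{m+x}\left(\frac{m}{x}\right)^{1/q} = m^{1/q}\,x^{-1/q}(m+x)^{-1}
\]
is a product of two positive strictly decreasing functions on $(0,\infty)$, hence is itself strictly decreasing for every $m\ge 1$; there is no interior maximum. Consequently the elementary integral test gives directly
\[
\sum_{n=1}^{\infty}\frac{1}{m+n}\left(\frac{m}{n}\right)^{1/q} < \int_0^{\infty}\frac{1}{m+x}\left(\frac{m}{x}\right)^{1/q}\,dx = \frac{\pi}{\sin(\pi/p)},
\]
and symmetrically for the other factor, with no need for the midpoint/convexity patch or the step-function detour you propose. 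With that simplification your argument is complete and yields the stated inequality with the sharp constant.
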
\par\noindent It was Hilbert who first proved the above for the special case $p=q=2$ in one of his lectures on integral equations. His proof was published by Weyl (1908). The first proof for general $p,\hsp q>1$ was given by Hardy and Riesz \cite{HarRie}. Perhaps the best reference for the proof is \cite{Hardy 1}. A sharp version of Hilbert's inequality is the following: For $(a_m)_{m=0}^{\infty}$ and $(b_n)_{n=0}^{\infty}$ both sequences of non-negative numbers such that $(a_m)\in \ell^p$ and $(b_n) \in \ell^q$, there holds 
\be \label{11} \sum_{m=0}^{\infty}\hsp \sum_{n=0}^{\infty}\frac{a_m\hsp b_n}{m+n+1} \leq \frac{\pi}{\text{sin}\left(\frac{\pi}{p}\right)} \left(\sum_{m=0}^{\infty}a_m^p\right)^{\frac{1}{p}}\left(\sum_{n=0}^{\infty}b_n^q\right)^{\frac{1}{q}}.\ee The Hilbert operator and its associated Hilbert matrix are now introduced as follows: Consider the operator $H$ on $\ell^p$ given by
\[ H: (a_n)_{n=0}^{\infty} \mapsto \left( \sum_{m=0}^{\infty} \frac{a_m}{m+n+1}\right)_{n=0}^{\infty}. \] The fact that this is a bounded operator on $\ell^p$ for $p>1$ follows from Hilbert's inequality as follows. 

\vspace{3mm}

\par \noindent We recall that the norm of $(a_n) \in \ell^p$ is $\lVert (a_n) \rVert_{\ell^p} = \left( \sum_{n=0}^{\infty} a_n^p \right)^{\frac{1}{p}}$.Moreover there must hold \[ \lVert (a_n) \rVert_{\ell^p} = \sup_{h \in (\ell^p)^* , \hsp \hsp \lVert h \rVert \leq 1} h((a_n)), \]where $h$ is any element of the dual space $\ell^q$ with norm $ 1$. Therefore \[ \lVert (a_n) \rVert_{\ell^p} = \sup_{(b_n) \in \ell^q, \hsp \hsp \rVert (b_n) \rVert_{\ell^q} \leq 1 } \bigg\lvert \sum_{n=0}^{\infty} a_n b_n\bigg\rvert.\]By fixing an arbitrary number $k$ and taking the supremum over all sequences $(b_n) \in \ell^q$ such that $\lVert (b_n) \rVert_{\ell^q} =k$, we arrive at the desired result.
\vspace{3mm} The associated matrix 

\[ H = \begin{pmatrix} 1 & \frac{1}{2} & \frac{1}{3} & \dots \\ 
\frac{1}{2} & \frac{1}{3} & \frac{1}{4} & \dots \\
\frac{1}{3} & \frac{1}{4} & \frac{1}{5} & \dots \\
\vdots & \vdots & \vdots & \ddots
\end{pmatrix}
\] acting on column vectors whose entries are terms of a sequence in $\ell^p$ is another way to look at the operator (we shall not be making a notational distinction between $H$ and its associated matrix). Further historical notes and properties of the Hilbert matrix can be found in \cite{Daskalo}.

\vspace{3mm}
\par \noindent Notice that each entry in $H$ can be written as \[ H_{n,\hsp k} =  \int_0^1 g_{n,k}(t) \hsp \text{d}t, \]where $g_{n,k}(t) := \binom{n+k}{k} \hsp (1-t)^n \hsp t^k$.
Motivated by this integral representation, in the current paper we shall be looking at a class of generalized Hilbert operators, arising from  replacing $\text{d}t$ with finite, positive Borel measures. The definition is as follows.

\begin{definition}
Let $\mu$ be a finite, positive Borel measure on $[0,1]$. We define the generalized Hilbert matrix $\mathcal{C}^{\mu}$ by \begin{equation}
    \mathcal{C}^{\mu}_{n, k} := \binom{n+k}{k} \hsp \int_0^1 (1-t)^n \hsp t^k \hsp \text{d}\mu(t),
\end{equation} for $n,k$ integers ranging from 0 to $\infty$.

\end{definition}\noindent The generalized Hilbert matrix can be viewed as an operator on $\ell^p$ given by $\ell^p \ni (c_n)_{n=0}^{\infty} \mapsto (d_n)_{n=0}^{\infty}$, with \begin{equation}
    d_n := \sum_{k=0}^{\infty}\mathcal{C}^{\mu}_{n,k}c_k.\end{equation}
\vspace{3mm}

\par\noindent Let us, finally, note that the corresponding question to the one addressed in this paper for the case of generalized C\'esaro operators was tackled in \cite{Hardy} and throughout this work, our general approach significantly borrows from the ideas presented in it.

\subsection{Preliminaries}
In this subsection we will be introducing a series of lemmata which will become relevant later on. 
\begin{lemma}
There holds \[ \sum_{n=0}^{\infty} \binom{n+m}{m} \hsp s^n = (1-s)^{-m-1},\]for all $s \in (0,1).$\label{lem13}
\end{lemma}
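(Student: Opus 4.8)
The statement is the well-known generating-function identity $\sum_{n=0}^{\infty} \binom{n+m}{m}\, s^n = (1-s)^{-m-1}$ for $s\in(0,1)$, and the cleanest route is differentiation of the geometric series. The plan is to start from the elementary identity $\sum_{n=0}^{\infty} s^n = (1-s)^{-1}$, valid for $|s|<1$ (hence in particular for $s\in(0,1)$), and then differentiate $m$ times with respect to $s$. Since this is a power series with radius of convergence $1$, term-by-term differentiation is justified on $(0,1)$, so no delicate convergence issues arise — that is the reason I would phrase the argument via differentiation rather than, say, a direct combinatorial manipulation.

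Carrying this out: differentiating $(1-s)^{-1}$ a total of $m$ times gives $m!\,(1-s)^{-m-1}$ on the right-hand side. On the left-hand side, differentiating $\sum_{n\ge 0} s^n$ a total of $m$ times kills the terms with $n<m$ and sends $s^n$ to $n(n-1)\cdots(n-m+1)\,s^{n-m} = \frac{n!}{(n-m)!}\,s^{n-m}$. Reindexing with $n = k+m$, the left-hand side becomes $\sum_{k=0}^{\infty} \frac{(k+m)!}{k!}\, s^{k} = m!\sum_{k=0}^{\infty}\binom{k+m}{m} s^{k}$. Equating the two sides and dividing by $m!$ yields the claim after renaming the summation index back to $n$.

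I do not anticipate a genuine obstacle here; the only point requiring a word of care is the legitimacy of differentiating the series termwise, which follows from the standard theorem that a power series may be differentiated term by term within its open interval (equivalently: on any compact subinterval of $(-1,1)$ the differentiated series converges uniformly). Alternatively, for readers who prefer to avoid analysis entirely, one can give a purely formal/inductive proof: the case $m=0$ is the geometric series, and the inductive step multiplies the identity for $m-1$ by $(1-s)^{-1}=\sum_{j\ge 0}s^j$ and uses the Vandermonde-type convolution $\sum_{a+b=n}\binom{a+m-1}{m-1} = \binom{n+m}{m}$. I would present the differentiation argument as the main proof and perhaps mention the inductive alternative in a sentence.
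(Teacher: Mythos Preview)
Your proposal is correct. Your primary route---differentiating the geometric series $m$ times and reindexing---is a different argument from the one the paper gives, though your secondary ``inductive alternative'' is exactly the paper's proof. The paper proceeds by induction on $m$: it writes $(1-s)^{-m-2} = (1-s)^{-1}(1-s)^{-m-1}$, multiplies the two absolutely convergent series via the Cauchy product (Mertens' theorem), and then verifies the combinatorial identity $\sum_{l=0}^{n}\binom{m+l}{m}=\binom{m+n+1}{m+1}$ by a separate induction using Pascal's rule.

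Your differentiation argument is shorter and more direct, and it avoids the auxiliary binomial-sum identity entirely; the only analytic input is termwise differentiability of power series inside the radius of convergence, which you correctly flag. The paper's approach is slightly more elementary in that it uses only the Cauchy product and finite combinatorics, at the cost of a nested induction. Either is perfectly adequate for this lemma.
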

\begin{proof}
That the left-hand side converges (absolutely) is a direct consequence of the ratio test for series. One can use induction on $m$. For $m=0$, the result holds. Assume the result holds for $m= k$. Then we have \[ (1-s)^{-m-2} = \frac{1}{1-s} (1-s)^{-m-1} = \left(1+s+s^2+\dots\right) \hsp \sum_{n=0}^{\infty} \binom{n+m}{m} s^n. \] Since both series above converge absolutely, their product is given by the series 

\[ \sum_{n=0}^{\infty} \left(\sum_{l=0}^{n} \binom{m+l}{m} \right)\hsp s^n, \]which is convergent by Mertens' theorem on the Cauchy product of series. Therefore, all one has to show is that \[ \sum_{l=0}^{n} \binom{m+l}{m}  = \binom{m+n+1}{m+1}, \]which itself can be shown by a straightforward induction, using the fact that \[ \binom{m+n+1}{m+1} +\binom{m+n+1}{m} = \binom{m+n+2}{m+1}.\]
\end{proof}

\begin{lemma}\label{lem14}
There holds \[ \frac{1-t}{1-t\hsp e^{-x}} \geq e^{-\frac{t}{1-t}\hsp x},\] for $x$ non-negative and $t\in [0,1)$.
\end{lemma}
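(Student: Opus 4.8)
The plan is to fix $t \in [0,1)$ and regard both sides of the claimed inequality as functions of $x \ge 0$, thereby reducing everything to a one–variable monotonicity statement anchored at $x=0$. The case $t=0$ is trivial, since both sides equal $1$, so I would assume $t \in (0,1)$. Since both sides are strictly positive (note $t e^{-x} \le t < 1$, so the denominator $1 - t e^{-x}$ never vanishes), I would take logarithms and introduce
\[
f(x) \;:=\; \log(1-t) \;-\; \log\!\left(1 - t e^{-x}\right) \;+\; \frac{t}{1-t}\,x ,
\]
so that the assertion becomes $f(x) \ge 0$ for all $x \ge 0$. One checks immediately that $f(0) = \log(1-t) - \log(1-t) + 0 = 0$.

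The heart of the argument — and essentially its only computation — is to show $f'(x) \ge 0$ on $[0,\infty)$. Differentiating gives
\[
f'(x) \;=\; \frac{t}{1-t} \;-\; \frac{t e^{-x}}{1 - t e^{-x}} .
\]
Since $t>0$ and both denominators are positive, the inequality $f'(x)\ge 0$ is equivalent, after clearing denominators and cancelling the common factor $t$, to $1 - t e^{-x} \ge (1-t)e^{-x}$, i.e. to $1 \ge e^{-x}$, which holds exactly because $x \ge 0$. Hence $f$ is non-decreasing on $[0,\infty)$, and together with $f(0)=0$ this yields $f(x)\ge 0$; exponentiating recovers the stated inequality.

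I do not expect a genuine obstacle here: the only points requiring care are the positivity of $1 - t e^{-x}$ on the whole range, the legitimacy of cancelling $t$ (valid since $t>0$, the case $t=0$ having been handled separately), and keeping track of the direction of the inequality through the algebra. An equivalent route, should one prefer to avoid the derivative, is to substitute $s = e^{-x} \in (0,1]$ and prove $\frac{1-t}{1-ts} \ge s^{t/(1-t)}$ either by the same calculus in the variable $s$ or by invoking convexity of $-\log$, but the differentiation in $x$ is the most economical presentation.
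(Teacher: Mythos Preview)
Your proof is correct and follows essentially the same approach as the paper: fix $t$, differentiate in $x$, and use that $x=0$ is the extremal point. The only cosmetic difference is that you take logarithms and show the resulting function is nondecreasing from $0$, whereas the paper works directly with $f_t(x)=(1-te^{-x})e^{-\frac{t}{1-t}x}$ and observes that its unique critical point on $[0,\infty)$ is at $x=0$; both reduce to the same one-variable calculus.
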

\begin{proof}
One can consider, for a fixed $t$, the function \[f_t(x) = (1-t\hsp e^{-x}) e^{-\frac{t}{1-t}\hsp x}.\]Differentiating with respect to $x$ we see that the only critical point is at $x=0$, whence the result follows.
\end{proof}

\begin{lemma}
\label{lem15} There holds \[ \frac{1}{\Gamma(\omega)} \int_0^{\infty} \text{e}^{-a\hsp x} x^{w-1}\hsp \text{d}x = a^{-w},\]for any $a>0$ and $w>0$.
\end{lemma}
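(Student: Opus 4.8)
The plan is to reduce this identity to the very definition of the Gamma function by means of a linear change of variables. Recall that for $w>0$ one has $\Gamma(w)=\int_0^{\infty} e^{-u}\hsp u^{w-1}\hsp \text{d}u$, and that this improper integral converges absolutely: near $u=0$ the integrand behaves like $u^{w-1}$, which is integrable precisely because $w>0$, while near $u=\infty$ the exponential factor dominates any power. The identical reasoning shows that $\int_0^{\infty} e^{-a\hsp x}\hsp x^{w-1}\hsp \text{d}x$ converges for every $a>0$, so the manipulations below are legitimate. (Here I read the $\omega$ in $\Gamma(\omega)$ as the same parameter $w$ appearing in the exponent and the answer, since otherwise the statement does not typecheck.)

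First I would perform the substitution $u=a\hsp x$ in the integral on the left-hand side. Since $a>0$, the map $x\mapsto a\hsp x$ is an increasing $C^1$ bijection of $(0,\infty)$ onto itself with $\text{d}u=a\hsp \text{d}x$ and $x^{w-1}=a^{-(w-1)}u^{w-1}$, so
\[ \int_0^{\infty} e^{-a\hsp x}\hsp x^{w-1}\hsp \text{d}x \hsp = \hsp \int_0^{\infty} e^{-u}\hsp a^{-(w-1)} u^{w-1}\hsp \frac{\text{d}u}{a} \hsp = \hsp a^{-w} \int_0^{\infty} e^{-u}\hsp u^{w-1}\hsp \text{d}u \hsp = \hsp a^{-w}\hsp \Gamma(w). \]
Dividing both sides by $\Gamma(\omega)=\Gamma(w)$, which is a positive finite number, gives the claimed identity $a^{-w}$.

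There is no genuine obstacle here; the only two points that deserve a word of justification are the convergence of the improper integral, handled by the hypothesis $w>0$ at the origin together with exponential decay at infinity, and the validity of the change of variables on the non-compact domain $(0,\infty)$, which one obtains by first doing the substitution on each compact interval $[\varepsilon,R]$ and then letting $\varepsilon\to 0^+$ and $R\to\infty$, invoking monotone (or dominated) convergence.
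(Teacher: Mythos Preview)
Your proof is correct and follows the same approach as the paper: both reduce the integral to the definition of $\Gamma(w)$ via the substitution $u=a\hsp x$. Your version is simply more careful about justifying convergence and the validity of the change of variables on $(0,\infty)$.
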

\begin{proof}
As long as $a$ is positive, the integral is well-defined and convergent. Recall that \[ \Gamma(\omega) = \int_0^{\infty} e^{-x} x^{w-1} \hsp\text{d}x. \] Setting $u=a\hsp x$, we have $\text{d}x = \frac{\text{d}u}{a}$ and the integral becomes \begin{equation*}
        \int_0^{\infty} e^{-u} u^{w-1} a^{-w} \hsp \text{d}u = a^{-w}\hsp \Gamma(\omega).
\end{equation*}The result follows.
\end{proof}
\section{Boundedness of $\mathcal{C}^{\mu}$ on $\ell^p$}
In the current section we shall prove two theorems regarding the boundedness of $\mathcal{C}^{\mu}$ as an operator on $\ell^p$ spaces. The first concerns the case where there are no Dirac masses at $0$ or $1$, while the latter allows for the existence of such masses at the endpoints of $[0,1]$.

\begin{theorem}
Fix $1\leq p \leq \infty$. If $\mu$ has no Dirac masses at $0$ or $1$, then the matrix $\mathcal{C}^{\mu}$ is bounded as an operator from $\ell^p$ to itself if and only if\[ \mathcal{N}^{\mu}_{p} := \int_0^1 \frac{1}{(1-t)^{1-\frac{1}{p}}\hsp t^{\frac{1}{p}}} \hsp \text{d}\mu(t) < \infty.\]Moreover, if that is the case, the operator norm equals $\mathcal{N}^{\mu}_p$ (here we understand that when $p=\infty$, the integral becomes $\int_0^1 \frac{1}{1-t}\hsp \text{d}\mu(t)$).
\end{theorem}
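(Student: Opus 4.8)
My plan is to compute the operator norm via a Schur-test-type argument, using the integral representation of the matrix entries to reduce everything to one-dimensional estimates. The key idea is that for each fixed $t\in(0,1)$ the rank-one "kernel" $\binom{n+k}{k}(1-t)^nt^k$ acts, after an appropriate diagonal change of variables, like the multiplication-by-a-constant on a weighted $\ell^p$ space, and Lemma~\ref{lem13} is exactly the statement that lets one sum the relevant series in closed form. Concretely, I would first treat $1<p<\infty$ with $q$ its conjugate exponent, and test the operator on the sequences $c_k=(1-t_0)^{\alpha}$-type power sequences; the natural candidate comes from Lemma~\ref{lem13}, namely trying $c_k = r^k$ for $r$ close to $1$, or better, the "diagonal" family that diagonalizes each rank-one piece. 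Applying Lemma~\ref{lem13} with $s = (1-t)r$ (or similar) collapses $\sum_k \binom{n+k}{k}t^k r^k$ into $(1-tr)^{-n-1}$, and then summing in $n$ against another geometric factor gives another closed form; optimizing the exponent will single out the weight $(1-t)^{-1+1/p}t^{-1/p}$.

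For the \textbf{upper bound} $\lVert \mathcal{C}^\mu\rVert_{\ell^p\to\ell^p}\le \mathcal{N}^\mu_p$, I would use the Schur test with test functions that are powers: one shows that with $\varphi_k = k^{-\beta}$ (or a shifted version $(k+1)^{-\beta}$) for a suitable $\beta$ depending on $p$, one has, for each fixed $t$,
\[
\sum_{k=0}^\infty \binom{n+k}{k}(1-t)^n t^k \varphi_k^{q} \le C(t)\,\varphi_n^{q},\qquad
\sum_{n=0}^\infty \binom{n+k}{k}(1-t)^n t^k \varphi_k^{p}\cdot(\text{dual})\le C'(t)\,\varphi_k^{p},
\]
where $C(t),C'(t)$ multiply out, after integrating $\dmt$, to $\mathcal{N}^\mu_p$. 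Here the absence of Dirac masses at $0$ and $1$ is what guarantees the pointwise (in $t$) estimates have finite integral precisely when $\mathcal{N}^\mu_p<\infty$; a mass at $t=1$ would make $(1-t)^{-1+1/p}$ blow up, and a mass at $t=0$ would make $t^{-1/p}$ blow up, so those cases genuinely need the separate treatment promised for the second theorem. The required one-variable inequalities are, after using $\binom{n+k}{k}(1-t)^nt^k$ and Lemma~\ref{lem13}, equivalent to Beta-function / Gamma-function identities, for which Lemma~\ref{lem15} (the Laplace-transform form of $a^{-w}=\Gamma(w)^{-1}\int_0^\infty e^{-ax}x^{w-1}dx$) together with Lemma~\ref{lem14} provide the sharp constants. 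I expect to write $\binom{n+k}{k}^{\pm}$-type ratios as Beta integrals and reduce the sharp Schur test to the computation $\int_0^\infty \frac{x^{s-1}}{1+x}\,dx = \frac{\pi}{\sin\pi s}$ — which is exactly why the classical Hilbert constant $\pi/\sin(\pi/p)$ appears in the Lebesgue-measure case and why the general answer is the "$t$-averaged" version $\mathcal{N}^\mu_p$.

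For the \textbf{lower bound} (and hence exactness of the norm), I would exhibit near-extremal sequences: take $c_k^{(\varepsilon)} = (k+1)^{-1/p-\varepsilon}$, compute $\mathcal{C}^\mu c^{(\varepsilon)}$ using Lemma~\ref{lem13} to sum the inner series, and let $\varepsilon\downarrow 0$; the ratio $\lVert \mathcal{C}^\mu c^{(\varepsilon)}\rVert_p/\lVert c^{(\varepsilon)}\rVert_p$ should tend to $\mathcal{N}^\mu_p$ by dominated/monotone convergence, with the $t$-integral of the pointwise limit reproducing $\mathcal{N}^\mu_p$ exactly. The necessity of $\mathcal{N}^\mu_p<\infty$ then follows because if the integral diverges, these test sequences (truncated) already witness unboundedness. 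The cases $p=1$ and $p=\infty$ I would do separately and more directly: for $p=\infty$ boundedness is equivalent to uniformly bounded row sums, and $\sum_k \binom{n+k}{k}\int_0^1(1-t)^nt^k\dmt = \int_0^1 (1-t)^n(1-t)^{-n-1}\dmt = \int_0^1 \frac{1}{1-t}\dmt$ by Lemma~\ref{lem13} — note this is independent of $n$, which is why the $p=\infty$ formula degenerates so cleanly; $p=1$ is the transpose statement about column sums and works the same way.

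The \textbf{main obstacle} I anticipate is obtaining the \emph{sharp} constant in the Schur test rather than just boundedness: a crude Schur test gives $\lVert\mathcal{C}^\mu\rVert\le C\,\mathcal{N}^\mu_p$ with a non-optimal $C$, and squeezing $C$ down to $1$ requires the test function's exponent to be tuned so that the two one-sided sums balance \emph{exactly}, which in turn forces the Beta-integral computation to come out as the reflection formula for $\Gamma$ with no slack. Getting the interchange of summation and the $t$-integration rigorously justified (Tonelli, since everything is non-negative) is routine, but matching the lower-bound test sequences to the same constant — so that upper and lower bounds meet — is where the care lies, and it is essentially the point where the hypothesis "no Dirac mass at $0$ or $1$" is used in an essential rather than cosmetic way.
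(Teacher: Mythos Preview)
Your lower-bound strategy (test sequences $c_k^{(\varepsilon)}=(k+1)^{-1/p-\varepsilon}$, then $\varepsilon\downarrow0$) and your treatment of $p=\infty$ via row sums match the paper's argument essentially line for line; Lemmas~\ref{lem14} and~\ref{lem15} are indeed used in the paper precisely for this half of the proof, in just the way you describe.

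Where you diverge is the upper bound. You anticipate a Schur test with power weights $\varphi_k=(k+1)^{-\beta}$, Beta-integral identities, and the reflection formula to squeeze the Schur constant down to~$1$, and you flag this as the main obstacle. In fact no such obstacle exists: for each fixed $t\in(0,1)$ the kernel $K_{n,k}(t)=\binom{n+k}{k}(1-t)^nt^k$ has \emph{constant} row sums $(1-t)^{-1}$ and \emph{constant} column sums $t^{-1}$, both by Lemma~\ref{lem13}. Hence the trivial Schur test ($\beta=0$) already yields $\|K(\cdot,t)\|_{\ell^p\to\ell^p}\le (1-t)^{-1/q}t^{-1/p}$ with no slack, and integrating in $t$ via Minkowski's integral inequality gives $\mathcal N_p^\mu$ on the nose. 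The paper packages exactly this as: H\"older with the kernel itself as weight, then Lemma~\ref{lem13} twice, then Minkowski. Lemmas~\ref{lem14} and~\ref{lem15} play no role in this direction, and the constant $\pi/\sin(\pi/p)$ in the classical case is not what certifies sharpness of the Schur step---it is merely the value of $\mathcal N_p^\mu$ when $\mu$ is Lebesgue.

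One small correction: the fixed-$t$ kernel is \emph{not} rank one, since $\binom{n+k}{k}$ does not factor as $f(n)g(k)$; what makes the argument clean is not a rank-one structure but the constant-row-and-column-sum property above. So your plan is sound and close to the paper's, but you are reaching for heavier tools than the upper bound requires.
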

\begin{proof}

\par \noindent Suppose that $\int_0^1 \frac{1}{(1-t)^{\frac{p-1}{p}}\hsp t^{\frac{1}{p}}} \hsp \text{d}\mu(t)$ is finite and there are no Dirac masses at $0$ or $1$. We will, in the first instance, show that $\mathcal{C}^{\mu}$ is bounded from $\ell^p$ to $\ell^p$ for $1\leq p < \infty$ and we will calculate its norm. We will then treat the case $p=\infty$. 

\vspace{3mm}

\par \noindent We begin by defining, for a given sequence $(a_n)_{n=0}^{\infty} \in \ell^p$, the function $e_n: [0,1] \to \mathbb{R}$ given by \begin{equation} \label{ent}
    e_n(t) = \sum_{m=0}^\infty \binom{n+m}{m} t^m \hsp (1-t)^n \hsp a_m.
\end{equation}We claim that $e_n$ is well-defined on $[0,1]$ and that there holds, for $t\in (0,1)$, \begin{equation}
    \sum_{n=0}^{\infty}e_n^p \leq \frac{1}{(1-t)^{p-1} \hsp t} \sum_{n=0}^{\infty} a_n^p.
\end{equation}To check this, we first look at the case $p>1$. Here, H\"older's inequality gives 

\begin{equation}
    e_n(t) \leq \left( \sum_{m=0}^{\infty} \underbrace{{\binom{n+m}{m}}^q \hsp t^{q\hsp m} \hsp (1-t)^{q\hsp n} }_{:= \hsp B_m} \right)^{\frac{1}{q}} \left(\sum_{m=0}^{\infty} a_m^p \right)^{\frac{1}{p}}, \label{step1}
\end{equation}where $q =\frac{p}{p-1}$ is the H\"older conjugate of $p$, i.e. $\frac{1}{q} + \frac{1}{p} =1$.
To check that the first term on the right-hand side in \eqref{step1} is convergent, the ratio test for series gives

\begin{equation}
    \frac{B_{m+1}}{B_m} = \left( \frac{\binom{n+m+1}{m+1} \hsp t^{m+1}
    \hsp (1-t)^n}{\binom{n+m}{m} \hsp t^m \hsp (1-t)^n} \right)^q = \left(\frac{m+n+1}{m+1} \hsp t\right)^q \to t^q,
\end{equation}as $m\to \infty$. Hence, for $t\in [0,1)$, we have \[\lim_{m \to \infty} \frac{B_{m+1}}{B_m} = t^q<t<1\] and therefore the series is (absolutely) convergent. Finally, for $t=1$, the  partial sums $\sum_{m=0}^K B_m =0$ and hence the series equals zero. This shows that for  all $t \in[0,1]$, $e_n(t)$ is well-defined. 

\vspace{3mm}
\par \noindent For $p=1$, it is enough to show that, for a fixed $t \in [0,1]$ and for any fixed $n$, the sequence \[ c_m(n;t) := \binom{n+m}{m}\hsp t^m (1-t)^n \]is in $\ell^{\infty}$. Consider a biased coin with probability of heads $t$ and probability of tails $1-t$. The probability of landing $m$ heads in $(n+m)$ throws for this coin is precisely $c_m(n;t)$. Hence each term is bounded by 1 and the result follows. This proves that $e_n(t)$ is well-defined when $(a_n) \in \ell^1$.
\vspace{3mm}

\noindent By H\"older's inequality again, we have

\begin{equation}
    \begin{split}
        e_n^p(t) &\leq \left(\sum_{m=0}^{\infty} \binom{n+m}{m}\hsp (1-t)^n \hsp t^m \hsp a_m^p \right) \left( \sum_{m=0}^{\infty}\binom{n+m}{m} \hsp (1-t)^n\hsp t^m \right)^{p-1} \\&\leq \frac{1}{(1-t)^{p-1}} \sum_{m=0}^{\infty} \binom{n+m}{m}\hsp (1-t)^n \hsp t^m \hsp a_m^p.
    \end{split}
\end{equation}Summing over $n$, we get (for $0< t<1$),

\begin{equation}
    \begin{split}
        \sum_{n=0}^{\infty} e_n^p &\leq \sum_{n=0}^{\infty}\sum_{m=0}^\infty \frac{1}{(1-t)^{p-1}} \hsp \binom{n+m}{m}(1-t)^n\hsp t^m\hsp a_m^p \\&\leq\sum_{m=0}^{\infty} \frac{1}{(1-t)^{p-1}}\hsp t^m\hsp a_m^p \hsp \sum_{n=0}^{\infty}\binom{n+m}{m}\hsp(1-t)^n \\ &\leq \sum_{m=0}^{\infty} \frac{1}{(1-t)^{p-1}\hsp t}a_m^p, \label{use}
    \end{split}
\end{equation}where we have used Lemma \ref{lem13}. We now have \begin{equation*}
    \begin{split}
        b_n &= \sum_{m=0}^{\infty} \int_{0}^1 \binom{n+m}{m}\hsp t^m \hsp (1-t)^n\hsp a_m \hsp\text{d}\mu(t) \\ &=\int_{0}^1  \sum_{m=0}^{\infty} \binom{n+m}{m}\hsp t^m \hsp (1-t)^n\hsp a_m \hsp\text{d}\mu(t) \hspace{5mm} \text{(by the \footnote{Dominated Convergence Theorem}D.C.T.)} \\ &= \int_0^1 e_n(t)\hsp\text{d}\mu(t).
    \end{split}
\end{equation*}
By Minkowski's inequality, we have 
\begin{equation}
       \left(\sum_{n=0}^{\infty}b_n^p\right)^{\frac{1}{p}} \leq \int_0^1 \left( \sum_{n=0}^{\infty} e_n^p\right)^{\frac{1}{p}} \hsp \text{d}\mu(t) \leq \int_0^1 \frac{1}{t^{\frac{1}{p}}\hsp (1-t)^{\frac{p-1}{p}}}\hsp \left(\sum_{n=0}^{\infty}a_n^p\right)^{\frac{1}{p}} \hsp\text{d}\mu(t),
\end{equation}or equivalently
\begin{equation}
    \lVert (b_n) \rVert_{\ell^p} \leq \left(\int_0^1 \frac{1}{t^{\frac{1}{p}}\hsp (1-t)^{\frac{p-1}{p}}} \hsp\text{d}\mu(t)\right)\lVert (a_n) \rVert_{\ell^p}.
\end{equation}To show that $\mathcal{N}_p^{\mu}$ is the optimal constant (and hence the norm of the corresponding operator, when it is finite) we work as follows. Fix any $\eta>0$ small. There exists a $\delta >0$ such that \begin{equation}
    \int_{\delta}^{1-\delta}  \frac{1}{t^{\frac{1}{p}}\hsp (1-t)^{\frac{p-1}{p}}} \hsp\text{d}\mu(t) > (1-\eta)\hsp \mathcal{N}_{p}^{\mu}.
\end{equation}That such a $\delta$ exists follows from the monotone convergence theorem, when applied for example to the functions $f_n(t) = \frac{1}{t^{\frac{1}{p}}\hsp (1-t)^{\frac{p-1}{p}}} \mathds{1}_{[\frac{1}{n},\frac{n-1}{n}]}$.  We choose $c, \hsp N, \hsp \varepsilon$ (in this order) such that \begin{itemize}
    \item There holds $\left( 1+ \frac{1}{c}\right)^{-\frac{2}{p}} > 1- \eta$.
    
    \item There holds $N \geq \ceil{\frac{c}{\delta}}$, so that \[ \int_{\frac{c}{n}}^{1-\frac{c}{n}}   \frac{1}{t^{\frac{1}{p}}\hsp (1-t)^{\frac{p-1}{p}}} \hsp\text{d}\mu(t) >  \int_{\delta}^{1-\delta}  \frac{1}{t^{\frac{1}{p}}\hsp (1-t)^{\frac{p-1}{p}}} \hsp\text{d}\mu(t) >(1-\eta)\hsp \mathcal{N}_{p}^{\mu}, \]for $n\geq N$.
    
    \item There holds $0< \varepsilon< \frac{1}{p}$. Moreover, denoting $w= \frac{1}{p}+\varepsilon$, we must choose $\varepsilon$ so that the following two conditions also hold: \be \int_{\delta}^{1-\delta} t^{-w}\hsp (1-t)^{w-1}\hsp \text{d}\mu(t) > (1-\eta)\hsp  \int_{\delta}^{1-\delta}  \frac{1}{t^{\frac{1}{p}}\hsp (1-t)^{\frac{p-1}{p}}} \hsp\text{d}\mu(t) \ee and such that \be \sum_{n=N}^{\infty} (n+1)^{-p\hsp w} > (1-\eta)\hsp \sum_{n=0}^{\infty} \hsp (n+1)^{-p \hsp w}. \ee The latter condition can be met for $\varepsilon$ sufficiently small, since the $p-$series diverges for $p=1$ and converges for $p>1$.
    \end{itemize}
Finally, we pick the sequence $a_n = (n+1)^{-w}$. Using Lemma \ref{lem15}, we have

\begin{equation}
    a_n = \frac{1}{\Gamma(\omega)}\hsp \int_0^{\infty} \mathrm{e}^{-(n+1)\hsp x}\hsp x^{w-1}\hsp \text{d}x
\end{equation}Using this and \eqref{ent}, we can get the following estimate for $e_n(t)$.

\begin{equation}
    \begin{split}
        e_n(t) &= \frac{1}{\Gamma(\omega)} \hsp \int_0^{\infty} \mathrm{e}^{-x}\hsp x^{w-1}\hsp \sum_{m=0}^{\infty} \binom{n+m}{m} \hsp t^m \hsp (1-t)^n \hsp \mathrm{e}^{-m\hsp x} \text{d}x \\ &= \frac{(1-t)^n}{\Gamma(\omega)}\hsp \int_0^{\infty} \mathrm{e}^{-x}\hsp x^{w-1}\hsp (1-t\hsp \mathrm{e}^{-x})^{-n-1}\hsp \text{d}x \\ &= \frac{1}{1-t}\hsp\frac{1}{\Gamma(\omega)} \hsp \int_0^{\infty} \mathrm{e}^{-x}\hsp x^{w-1}\hsp \left( \frac{1-t}{1-t\hsp \mathrm{e}^{-x}}\right)^{n+1}\hsp   \text{d}x \\ &\geq \frac{1}{1-t}\hsp\frac{1}{\Gamma(\omega)} \hsp \int_0^{\infty} \mathrm{e}^{-x}\hsp x^{w-1}\hsp \hsp \mathrm{e}^{-(n+1)\frac{t}{1-t}\hsp x}\hsp  \text{d}x \\&= \frac{1}{1-t} \left( \frac{1+n\hsp t}{1-t}\right)^{-w},
    \end{split}
\end{equation}where the penultimate step was obtained by Lemma \ref{lem14} and the last step from Lemma \ref{lem15}. When $\frac{c}{n}\leq t \leq 1-\frac{c}{n}$, we therefore have

\begin{equation}\label{214}
    \begin{split}
        e_n(t)&\geq \hsp (1-t)^{w-1}\hsp t^{-w}\hsp \left(1 +\frac{1}{c}\right)^{-w}\hsp (n+1)^{-w} \\ &\geq (1-t)^{w-1}\hsp t^{-w}\hsp (1-\eta)\hsp a_n.
    \end{split}
\end{equation}Using \eqref{214}, we have

\begin{equation}
    \begin{split}
        b_n = \int_0^1 e_n(t) \dmt &\geq \int_{\frac{c}{n}}^{1-\frac{c}{n}} e_n(t)\dmt \\&\geq (1-\eta)\hsp a_n\hsp\int_{\frac{c}{n}}^{1-\frac{c}{n}} t^{-w}\hsp (1-t)^{w-1}\dmt \\&\geq (1-\eta)\hsp a_n\hsp\int_{\delta}^{1-\delta} t^{-w}\hsp (1-t)^{w-1}\dmt \\ &\geq (1-\eta)^2\hsp a_n \hsp \int_{\delta}^{1-\delta} t^{-\frac{1}{p}}\hsp (1-t)^{\frac{1}{p}-1}\dmt \\&\geq (1-\eta)^3\hsp  a_n \hsp \mathcal{N}_{p}^{\mu}, \hspace{3mm}\text{for}\hspace{1mm}n\geq N,
    \end{split}
\end{equation}hence \begin{equation}
    \sum_{n=0}^{\infty}b_n^p \geq \sum_{n=N}^{\infty}b_n^p \geq \sum_{n=N}^{\infty} (1-\eta)^{3\hsp p}\hsp \left(\mathcal{N}_p^{\mu}\right)^p \hsp a_n^p \geq (1-\eta)^{3\hsp p+1}\hsp \left(\mathcal{N}_{p}^{\mu}\right)^p\hsp  \sum_{n=0}^{\infty}a_n^p.
\end{equation}
This establishes the "if" direction. For the "only if" direction, assume that $\mathcal{C}^{\mu}$ is bounded as an operator on $\ell^p$ for $1 \leq p < \infty$. We claim that the integral  $\mathcal{N}_p^{\mu}$ is finite. We shall make use of the same sequences as the ones just used. With the preceding notation, for an arbitrarily small but fixed $\eta >0$, we have for the sequences described above:

\begin{equation}
    b_n = \int_0^1 e_n(t) \hsp \text{d}\mu(t) \geq (1- \eta) \hsp a_n \hsp \int_0^1 (1-t)^{w-1} \hsp t^{-w} \hsp \text{d}\mu(t) 
\end{equation} and hence

\begin{equation}
    \left( \sum b_n^p\right)^{\frac{1}{p}} \geq \begin{bmatrix}(1-\eta) \hsp \int_0^1 (1-t)^{w-1} \hsp t^{-w} \hsp \text{d}\mu(t)\end{bmatrix} \hsp \left( \sum a_n^p\right)^{\frac{1}{p}}. \label{219}
\end{equation}From here we observe, given the boundedness of $\mathcal{C}^{\mu}$, that $\int_0^1 (1-t)^{w-1} \hsp t^{-w} \hsp \text{d}\mu(t)$ is finite and uniformly bounded above by the operator norm as $\eta \to 0$ . Moreover, it is not hard to establish that, as $\eta \to 0$, the way $\varepsilon$ is chosen means that $\varepsilon \to 0$ as well and hence the functions 

\[ (1-\eta)  (1-t)^{w-1} \hsp t^{-w} \to \frac{1}{(1-t)^{\frac{p-1}{p}}\hsp t^{\frac{1}{p}}} \]pointwise as $\eta \to 0$. Fix any $z > 0$ small. An application of the dominated convergence theorem on the interval $[z, 1-z]$ (on this fixed interval, the functions $ (1-\eta)  (1-t)^{w-1} \hsp t^{-w} $ are uniformly bounded above) now implies that the integral \[ \int_{z}^{1-z} \frac{1}{(1-t)^{\frac{p-1}{p}}\hsp t^{\frac{1}{p}}} \hsp \text{d}\mu(t) \]is finite and bounded above by the operator norm. As $z\to 0$, an application of the monotone convergence theorem gives the desired result.
\vspace{3mm}

\par\noindent We finally examine the case $p=\infty$. Assume that $\int_{0}^1 \frac{1}{1-t} \hsp \text{d}\mu(t)$ is finite and consider a sequence $(a_n) \in \ell^{\infty}$. We have 

\begin{equation}
\begin{split}
    b_n &= \sum_{m=0}^{\infty} \binom{n+m}{m} \int_0^1 (1-t)^n\hsp t^m \hsp a_m \hsp \text{d}\mu(t)\\ &\leq \begin{bmatrix}\int_0^1 \sum_{m=0}^{\infty} \binom{n+m}{m} (1-t)^n\hsp t^m \hsp \text{d}\mu(t)\end{bmatrix} \hsp \lVert (a_n) \rVert_{\ell^{\infty}}\\ &= \begin{bmatrix} \int_0^1 \frac{1}{1-t}\hsp \text{d}\mu(t)\end{bmatrix} \lVert (a_n) \rVert_{\ell^{\infty}}.
\end{split}
\end{equation}Taking the supremum over $n$, we have \be \lVert (b_n) \rVert_{\ell^{\infty}} \leq \int_{0}^1 \frac{1}{1-t}\hsp \text{d}\mu(t) \hsp \lVert (a_n) \rVert_{\ell^{\infty}}. \label{sup0} \ee By taking $(a_n)$ to be a constant sequence different than the zero sequence, we see that the constant is optimal.  Conversely, if the operator is bounded on $\ell^{\infty}$, picking any constant non-zero sequence $(a_n)$, we see that the integral $\int_0^1 \frac{1}{1-t}\hsp \text{d}\mu(t)$ is finite and the result follows.

\end{proof}

\subsection{The case where $\mu$ is allowed to have Dirac masses at $0$ and/or $1$}
To allow for the possibility of Dirac masses at $0$ or $1$, we need to be a bit more careful with semantics and notation. 
\subsubsection{Preliminary measure-theoretic notes}
Given a measure $\mu$ on $[0,1]$ and a function $f$ defined on $[0,1]$ which is measurable with respect to $\mu$, we shall denote the integral $\int_0^1 f(t) \hsp \text{d}\mu(t)$ by $\mu_{[0,1]} f$. In general, for any measurable subset $\mathcal{A}$ of $[0,1]$, we will use $\mu_{\mathcal{A}}(f)$ to denote the integral $ \mu_{[0,1]}(f\cdot \mathds{1}_{\mathcal{A}}) $, which is well-defined as $f\cdot \mathds{1}_{\mathcal{A}}$ is $\mu-$measurable. 

\vspace{3mm}

\par \noindent For a function $g$ defined only on a subset $\mathcal{P}$ of $[0,1]$, by a slight abuse of notation, we shall also use $\mu_{\mathcal{P}}(g)$ to denote the integral \[ \int_{\mathcal{P}} g \hsp \text{d}{\mu}_{\mathcal{P}}(t), \]where $\mu_{\mathcal{P}}$ is the restriction of the measure on $\mathcal{P}$ in the obvious way. 
\vspace{3mm}

\par \noindent For a given measure $\mu$ and for any function $f$ on $[0,1]$, the following decomposition holds:

\[ \mu_{[0,1]}(f) = \mu_{(0,1)}(f) + c_0 \hsp f(0) + c_1 \hsp f(1),\]where  $c_1 := \mu\left(\begin{Bmatrix} 0 \end{Bmatrix} \right)$ and $c_2:= \mu\left(\begin{Bmatrix}1 \end{Bmatrix} \right)$ are the Dirac masses of the measure at $0$ and $1$ respectively. This, in turn, provides the following decomposition for the matrix  $\mathcal{C}^{\mu}$:

\be  \mathcal{C}^{\mu}_{n,k} = \tilde{\mathcal{C}}_{n, k}^{\mu} + \hat{\mathcal{C}}_{n, k}^{\mu}, \label{deco} \ee where $\tilde{\mathcal{C}}_{n, k}^{\mu} := \mu_{(0,1)}\left( g_{n,k}(t) \right)$ and \[ \hat{\mathcal{C}}^{\mu}:= \begin{pmatrix} c_0 +c_1 & c_1 & c_1 & c_1 & \dots \\c_0 & 0 &0  & 0 & \dots\\ c_0 & 0&0 &0 & \ddots\\  c_0 & 0&0 &0 & \ddots\\ \vdots & \vdots& \vdots&\vdots &\ddots \\ 
\end{pmatrix}. \]
\subsubsection{Main Result}
In this section we will show the following:

\begin{theorem}
Assume that $\mu$ contains Dirac masses $c_0$ at $0$ and $c_1$ at $1$, with $\lvert c_0 \rvert + \lvert c_1 \rvert >0$. For $1\leq p <\infty$, the only case when $\mathcal{C}^{\mu}$ can be a bounded operator on $\ell^p$ is when $c_1\neq 0 = c_0$ and $p=1$. In that case, the operator is bounded if and only if

\[ \int_{(0,1]} \frac{1}{t}\hsp \text{d}\mu(t) =  \int_{(0,1)} \frac{1}{t}\hsp \text{d}\mu(t) + c_1 \]is finite. Moreover, this expression, when finite, equals the norm of the operator. Finally, when $p= \infty$, the operator is bounded if and only if $c_0\neq 0 = c_1$ and the integral \[ \int_{[0,1)} \frac{1}{1-t}\hsp \text{d}\mu(t)\]is finite. Moreover, the above, when finite, equals the norm of the operator on $\ell^{\infty}$.
\end{theorem}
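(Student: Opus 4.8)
The strategy is to reduce everything to Theorem 2.1 via the decomposition \eqref{deco}, $\mathcal{C}^{\mu} = \tilde{\mathcal{C}}^{\mu} + \hat{\mathcal{C}}^{\mu}$, exploiting that all three matrices have non-negative entries. Three elementary facts will carry the argument. First, since $0 \le \tilde{\mathcal{C}}^{\mu}_{n,k} \le \mathcal{C}^{\mu}_{n,k}$ and $0 \le \hat{\mathcal{C}}^{\mu}_{n,k} \le \mathcal{C}^{\mu}_{n,k}$, and since the $\ell^p \to \ell^p$ norm of a non-negative matrix is attained over non-negative sequences, boundedness of $\mathcal{C}^{\mu}$ forces boundedness of $\tilde{\mathcal{C}}^{\mu}$ (and of $\hat{\mathcal{C}}^{\mu}$). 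Second, conversely $\|\mathcal{C}^{\mu}\| \le \|\tilde{\mathcal{C}}^{\mu}\| + \|\hat{\mathcal{C}}^{\mu}\|$, by the coordinatewise triangle inequality applied to $(|a_n|)$. Third, the restriction $\mu_{(0,1)}$ is a finite Borel measure on $[0,1]$ with no atom at $0$ or $1$ and $\mathcal{C}^{\mu_{(0,1)}} = \tilde{\mathcal{C}}^{\mu}$, so Theorem 2.1 applies to it directly: $\tilde{\mathcal{C}}^{\mu}$ is bounded on $\ell^p$ iff $\mathcal{N}^{\mu_{(0,1)}}_p < \infty$, with norm $\mathcal{N}^{\mu_{(0,1)}}_p$.

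Next I would eliminate the configurations in which $\mathcal{C}^{\mu}$ cannot be bounded. If $c_0 > 0$ and $1 \le p < \infty$, test on $e_0 = (1,0,0,\dots)$: the image has $n$-th entry $\mathcal{C}^{\mu}_{n,0} = \int_0^1 (1-t)^n\,d\mu(t) \ge c_0 > 0$ for every $n$, so it is not in $\ell^p$. If $c_1 > 0$ and $1 < p < \infty$, note $\mathcal{C}^{\mu}_{0,k} = \int_0^1 t^k\,d\mu(t) \ge c_1$ for all $k$, so testing on $a_k = (k+1)^{-1} \in \ell^p \setminus \ell^1$ makes the $0$-th output coordinate $\ge c_1 \sum_k (k+1)^{-1} = +\infty$, so $\mathcal{C}^{\mu}$ is not even well defined on $\ell^p$. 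Since $c_0 + c_1 > 0$, this leaves, for $p < \infty$, only the case $p = 1$ with $c_0 = 0 < c_1$. For $p = \infty$, testing on the constant sequence $\mathbf{1}$ gives $0$-th coordinate $\int_0^1 (1-t)^{-1}\,d\mu(t)$, which is $+\infty$ whenever $c_1 > 0$; hence there too we must have $c_1 = 0$, and then $c_0 \ne 0$.

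It remains to treat the two surviving, mutually symmetric cases and to pin down the norm. For $p = 1$, $c_0 = 0 < c_1$: here $\hat{\mathcal{C}}^{\mu}$ has first row $(c_1, c_1, c_1, \dots)$ and all lower rows zero, so it acts by $(a_k) \mapsto (c_1 \sum_k a_k, 0, 0, \dots)$ and has $\ell^1 \to \ell^1$ norm $c_1$; meanwhile $\tilde{\mathcal{C}}^{\mu}$ is bounded on $\ell^1$ iff $\int_{(0,1)} t^{-1}\,d\mu(t) < \infty$, with norm that integral. Combining with the first two facts, $\mathcal{C}^{\mu}$ is bounded on $\ell^1$ iff $\int_{(0,1)} t^{-1}\,d\mu(t) < \infty$, equivalently iff $\int_{(0,1]} t^{-1}\,d\mu(t) = \int_{(0,1)} t^{-1}\,d\mu(t) + c_1 < \infty$, and then $\|\mathcal{C}^{\mu}\| \le c_1 + \int_{(0,1)} t^{-1}\,d\mu(t) = \int_{(0,1]} t^{-1}\,d\mu(t)$. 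The matching lower bound comes once more from $e_0$: by Tonelli and the geometric series, $\|\mathcal{C}^{\mu} e_0\|_{\ell^1} = \sum_{n\ge 0} \int_0^1 (1-t)^n\,d\mu(t) = \int_{(0,1]} t^{-1}\,d\mu(t)$ (the atom $\{0\}$ contributing nothing since $c_0 = 0$), so equality holds. The case $p = \infty$, $c_1 = 0 < c_0$ is handled identically with $0$ and $1$ exchanged: $\hat{\mathcal{C}}^{\mu}$ acts by $(a_k) \mapsto (c_0 a_0, c_0 a_0, \dots)$ with $\ell^\infty$-norm $c_0$; Theorem 2.1 (its $p=\infty$ clause) gives $\|\tilde{\mathcal{C}}^{\mu}\|_{\ell^\infty} = \int_{(0,1)} (1-t)^{-1}\,d\mu(t)$ when finite; and testing on $\mathbf{1}$ — with row sums $\sum_k \mathcal{C}^{\mu}_{n,k} = \int_0^1 (1-t)^n (1-t)^{-n-1}\,d\mu(t) = \int_{[0,1)} (1-t)^{-1}\,d\mu(t)$ evaluated via Lemma \ref{lem13} — shows $\|\mathcal{C}^{\mu}\| = \int_{[0,1)} (1-t)^{-1}\,d\mu(t) = c_0 + \int_{(0,1)} (1-t)^{-1}\,d\mu(t)$.

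The argument will be short once Theorem 2.1 is in hand, so the main obstacle is purely bookkeeping. Two points warrant care: the divergence arguments in the elimination step must be phrased through a single output coordinate being $+\infty$ rather than through norms, since a priori $\mathcal{C}^{\mu}$ need not define a map into $\ell^p$; and one should verify that $\mu_{(0,1)}$, viewed as a measure on $[0,1]$, carries no atom at the endpoints (so that Theorem 2.1 applies verbatim), along with the legitimacy of the Tonelli interchanges and of the triangle inequality for these infinite matrices. A pleasant feature is that, in contrast with Theorem 2.1, no delicate extremal sequence is needed for the sharp lower bounds: the naive test vectors $e_0$ (for $p = 1$) and $\mathbf{1}$ (for $p = \infty$) already realise the operator norm exactly, and the very same computation also delivers the ``only if'' direction.
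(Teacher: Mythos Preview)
Your argument is correct, and for the elimination steps and the $p=\infty$ case it coincides with the paper's. The genuine difference is in the $p=1$ analysis. The paper does not invoke Theorem~2.1 as a black box for $\tilde{\mathcal{C}}^{\mu}$: instead it reuses the functions $e_n(t)$ and the inequality $\sum_n e_n(t)\le t^{-1}\sum_n a_n$ to get the upper bound directly, and for optimality it recycles the approximate extremal sequences $a_n=(n+1)^{-w}$ together with the $\eta$--$\delta$--$c$--$N$--$\varepsilon$ machinery from the proof of Theorem~2.1, obtaining only $\sum b_n \ge \bigl((1-\eta)^4\int_{(0,1)} t^{-1}\,\text{d}\mu + c_1\bigr)\lVert a\rVert_{\ell^1}$ and then letting $\eta\to 0$. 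Your route---triangle inequality on $\tilde{\mathcal{C}}^{\mu}+\hat{\mathcal{C}}^{\mu}$ for the upper bound, and the single test vector $e_0$ for the matching lower bound via $\lVert \mathcal{C}^{\mu}e_0\rVert_{\ell^1}=\sum_{n\ge 0}\int_0^1(1-t)^n\,\text{d}\mu(t)=\int_{(0,1]}t^{-1}\,\text{d}\mu(t)$---is shorter and gives the exact norm without any limiting argument; it also delivers the ``only if'' direction for free. The paper's approach has the modest advantage of being self-contained (no appeal to the entrywise domination principle $0\le A_{n,k}\le B_{n,k}\Rightarrow \lVert A\rVert\le\lVert B\rVert$ is needed) and of reusing a single template across all $1\le p<\infty$, but for the endpoint $p=1$ your observation that $e_0$ is an exact extremiser is the cleaner argument.
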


\begin{proof}We begin by establishing the following two lemmata:

\begin{lemma}
If the measure $\mu$ contains a Dirac mass at $0$, then the matrix is unbounded on $\ell^p$ for $1\leq p < \infty$.
\end{lemma}

\par \noindent This holds for the following reason. The matrix $\hat{\mathcal{C}}^{\mu}$ for $c_0\neq 0$ does not even map $\ell^p$ to itself, since

\be \label{look}  \begin{pmatrix} c_0 +c_1 & c_1 & c_1 & c_1 & \dots \\c_0 & 0 &0  & 0 & \dots\\ c_0 & 0&0 &0 & \ddots\\  c_0 & 0&0 &0 & \ddots\\ \vdots & \vdots& \vdots&\vdots &\ddots \\ 
\end{pmatrix} \begin{pmatrix} a_0 \\a_1 \\ a_2 \\ a_3 \\  \vdots \end{pmatrix} = \begin{pmatrix} (c_0+c_1)\hsp a_0 + c_1 \hsp (a_1 + a_2+\dots) \\ c_0\hsp a_0 \\c_0\hsp a_0 \\c_0\hsp a_0\\ \vdots \end{pmatrix} \ee and clearly, as the  sequence on the right-hand side eventually consists of the same non-zero (as long as $a_0 \neq 0$) constant, it cannot be in $\ell^p$. Finally, since the matrix  $\tilde{\mathcal{C}}^{\mu}$ comprises only positive entries, the sum $\tilde{\mathcal{C}}^{\mu} + \hat{\mathcal{C}}^{\mu}$ also fails to map $\ell^p$ to $\ell^p$.
\vspace{3mm}
\begin{lemma}
If the measure $\mu$ contains a Dirac mass at $1$ and no Dirac mass at $0$, then the matrix $\mathcal{C}^{\mu}$ is unbounded as an operator on $\ell^p$ for $1 < p < \infty$.
\end{lemma}
\vspace{3mm}
\par\noindent To establish this, notice that the first term of the sequence on the right-hand side of \eqref{look} is $c_1 (a_0 + a_1 +\dots)$. In general, the $\ell^p$ spaces are nested, in the sense that $\ell^p \subset \ell^q$ for $p< q$ and hence for each $p>1$ there exists a sequence in $\ell^p$ which is not in $\ell^1$. Since $\tilde{\mathcal{C}}^{\mu}$ consists of only positive entries, it again follows that $\mathcal{C}^{\mu}$ does not map $\ell^p$ to itself when $c_1\neq 0 =c_0$ and $p>1$. The only case left to examine, when the measure contains a Dirac mass, is when $c_1 \neq 0 = c_0$ and $\tilde{\mathcal{C}}^{\mu}$ is viewed as an operator on $\ell^1$.
In this case we claim that the operator is bounded if and only if \[ \mu_{(0,1]} \left( \frac{1}{t}\right) = \int_{(0,1)} \frac{1}{t} \hsp \text{d}\mu(t) + c_1 \]is finite and that, when finite, the expression above equals the operator norm.

\vspace{3mm}
\par \noindent Indeed, assume first that the expression above is finite. Proceeding as in \eqref{use}, we get, for $0<t<1$:

\begin{equation}
    \sum_{n=0}^{\infty} e_n(t) \leq \frac{1}{t} \sum_{n=0}^{\infty} a_n(t)
\end{equation}for all $n$. We now notice that \be b_0 = \int_{(0,1)} e_0(t) \hsp \text{d}\mu(t) + c_1 \lVert (a_n) \rVert_{\ell^1} \label{eqn1} \ee and \be b_n = \int_{(0,1)} e_n(t) \hsp \text{d}\mu(t) \label{eqn2} \ee for $n \geq 1$. Adding \eqref{eqn1} and \eqref{eqn2} together, we arrive at \begin{equation}
    \lVert (b_n) \rVert_{\ell^1} \leq \left( \int_{(0,1)} \frac{1}{t} \hsp \text{d}\mu(t) + c_1 \right) \hsp \lVert (a_n) \rVert_{\ell^1}
\end{equation}To show that this is the optimal constant, we again use the sequences described in the previous section. With the same notation as before, with $a_n = (n+1)^{-w}$, we have

\begin{equation}
\begin{split}
    \sum_{n=0}^{\infty}b_n &= b_0 + \sum_{n=1}^{\infty} b_n = \int_{(0,1)} e_0(t) \hsp \text{d}\mu(t) + c_1 \lVert (a_n) \rVert_{\ell^1}+ \sum_{n=1}^{\infty} \int_{(0,1)} e_n(t) \hsp \text{d}\mu(t) \\ &\geq \left( (1-\eta)^{4}\int_{(0,1)}\frac{1}{t}\hsp \text{d}\mu(t) + c_1\right) \hsp\lVert (a_n) \rVert_{\ell^1}.
\end{split}
\end{equation}Letting $\eta \to 0$, we obtain the desired result.

\vspace{3mm}

\par \noindent Finally, let us suppose that the operator $\mathcal{C}^{\mu}$ is bounded on $\ell^1$. The same argument as in the previous case shows the finiteness of $\int_{(0,1]} \frac{1}{t} \hsp \text{d}\mu(t)$.

\vspace{3mm}
\par \noindent We finally examine the case $p=\infty$. The proof is similar in spirit to the previous results, although a bit simpler. Indeed, if $c_1\neq 0$ then it is clear from \eqref{deco} that $\mathcal{C}^{\mu}$ does not map $\ell^{\infty}$ to itself.

\vspace{3mm}
\par\noindent Assume that $\int_{[0,1)}\frac{1}{1-t} \hsp \text{d}\mu(t)$ is finite and consider a sequence $(a_n) \in \ell^{\infty}$. We have 

\begin{equation}\label{almost}
\begin{split}
    b_n &= \sum_{m=0}^{\infty} \binom{n+m}{m} \int_{(0,1)} (1-t)^n\hsp t^m \hsp a_m \hsp \text{d}\mu(t) +  c_0 \hsp a_n \\ &\leq \begin{bmatrix}\int_{(0,1)}\sum_{m=0}^{\infty} \binom{n+m}{m} (1-t)^n\hsp t^m \hsp \text{d}\mu(t)\end{bmatrix} \hsp \lVert (a_n) \rVert_{\ell^{\infty}} + c_0 a_n\\ &= \begin{bmatrix} \int_{(0,1)} \frac{1}{1-t}\hsp \text{d}\mu(t)\end{bmatrix} \lVert (a_n) \rVert_{\ell^{\infty}} + c_0 a_n.
\end{split}
\end{equation}Taking the supremum of \eqref{almost} over $n$, we have \be \lVert (b_n) \rVert_{\ell^{\infty}} \leq \int_{[0,1)} \frac{1}{1-t}\hsp \text{d}\mu(t) \hsp \lVert (a_n) \rVert_{\ell^{\infty}}. \label{sup} \ee By taking $(a_n)$ to be a constant sequence different than the zero sequence, we see that the constant is optimal.

\vspace{3mm}

\par \noindent Conversely, assuming that the operator is bounded from $\ell^{\infty}$ to itself, again picking a  non-zero constant sequence, we see that

\begin{equation}
    \begin{split}
        b_n &= \sum_{m=0}^{\infty} \binom{n+m}{m} \int_{(0,1)} (1-t)^n\hsp t^m \hsp a_m \hsp \text{d}\mu(t) +  c_0 \hsp a_n \\ &= \begin{bmatrix} \int_{[0,1)} \frac{1}{1-t}\hsp \text{d}\mu(t) \end{bmatrix} \hsp a_n.
    \end{split}
\end{equation}From this, the finiteness of the operator norm implies the finiteness of \[\int_{[0,1)} \frac{1}{1-t}\hsp \text{d}\mu(t)\] and the result follows.

\end{proof}
\noindent\textbf{Acknowledgements}
\vspace{3mm}

\noindent I would like to thank Aristomenis Siskakis for introducing me to this problem and for various beneficial suggestions regarding the format of the paper. I would also like to thank him, as well as Vasilis Daskalogiannis, Petros Galanopoulos and Giorgos Stylogiannis for several fruitful discussions on the topic. Finally, I thank Vasilis for useful comments and suggestions on the introduction.

\end{document}